\documentclass [12pt,oneside]{amsart}
\pagestyle{plain}
\usepackage{amsmath}

\setlength\footskip{0.2in} \setlength\topmargin{0.5in}
\setlength\headheight{4in} \setlength\headsep{0.5in}
\setlength\textheight{9.4in} \setlength\textwidth{6.4in}
\setlength\oddsidemargin{-0in} \setlength\evensidemargin{-0in}

\linespread{1.15}
\usepackage{multicol}
\usepackage {amsfonts}
\usepackage {amsmath}
\usepackage {amsthm}
\usepackage {amssymb}
\usepackage {framed}
\usepackage {amsxtra}
\usepackage {enumerate}
\usepackage {graphicx}
\usepackage{color}
\usepackage{graphicx}
\usepackage{graphicx}
\usepackage{wrapfig}
\usepackage{lscape}
\usepackage{rotating}
\usepackage{epstopdf}
\usepackage{pdflscape}
\usepackage{setspace}
\usepackage[left=0.86in, right=.8 in, top=0.7 in, footskip=0.5 in, bottom=0.8 in]{geometry}
\usepackage{adjustbox}
\usepackage{caption}
\makeatletter

\theoremstyle{definition}
\newtheorem{df}{Definition} [section]

\theoremstyle{plain}
\newtheorem{thm}[df]{Theorem}

\newtheorem{lemma}[df]{Lemma}

\newtheorem{conj}[df]{Conjecture}

\newtheorem{remark}[df]{Remark}


%
%
%

\title{On second order linear sequences of composite numbers}

\author{Dan Ismailescu$^2$}
\thanks{$^1$Mathematics Department, Hofstra University, Hempstead, NY 11549, \texttt{dan.p.ismailescu@hofstra.edu}}

\author{Adrienne Ko$^1$}
\thanks{$^2$ Fieldston School, Bronx, NY 10471, \texttt{19ako@ecfs.org}}
\author{Celine Lee $^3$}
\thanks{$^3$Chinese International School, Hong Kong SAR, China, \texttt{celinl@student.cis.edu.hk}}

\author{Jae Yong Park$^4$}
\thanks{$^4$The Lawrenceville School, Lawrenceville, NJ 08648, \texttt{jpark19@lawrenceville.org}}

\begin{document}


\begin{abstract}
In this paper we present a new proof of the following 2010 result of Dubickas, Novikas, and \u{S}iurys:

{\bf Theorem.}
{\emph {Let $(a,b)\in \mathbb{Z}^2$ and let $(x_n)_{n\ge 0}$ be the sequence defined by some initial values $x_0$ and $x_1$ and the second order linear recurrence
\begin{equation*}
x_{n+1}=ax_n+bx_{n-1}
\end{equation*}
for $n\ge 1$. Suppose that $b\neq 0$ and $(a,b)\neq (2,-1), (-2, -1)$. Then there exist two relatively prime positive integers $x_0$, $x_1$ such that $|x_n|$ is a composite integer
for all $n\in \mathbb{N}$.}}

The above theorem extends a result of Graham who solved the problem when $(a,b)=(1,1)$.




\end{abstract}

\maketitle
\pagenumbering{arabic}

\section{\bf{Introduction}}

In this paper we present a new proof of the following result of Dubickas, Novikas, and \u{S}iurys:
\begin{thm}\label{mainthm}\cite{dubickas}
Let $(a,b)\in \mathbb{Z}^2$ and let $(x_n)_{n\ge 0}$ be the sequence defined by some initial values $x_0$ and $x_1$ and the second order linear recurrence
\begin{equation}\label{recurrence}
x_{n+1}=ax_n+bx_{n-1}
\end{equation}
for $n\ge 1$. Suppose that $b\neq 0$ and $(a,b)\neq (2,-1), (-2, -1)$. Then there exist two relatively prime positive integers $x_0$, $x_1$ such that $|x_n|$ is a composite integer
for all $n\in \mathbb{N}$.
\end{thm}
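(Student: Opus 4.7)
The plan is to adapt the covering-congruence method used by Graham for the Fibonacci case $(a,b)=(1,1)$ to a general second-order recurrence. The goal is to force every $|x_n|$ to be divisible by some prime from a fixed finite list of ``small'' primes, while keeping $|x_n|$ strictly larger than that prime, so that $|x_n|$ is a proper composite.

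\emph{Step 1 (Periodicity modulo primes).} For each prime $p$ coprime to $b$, the pair $(x_n, x_{n+1})\bmod p$ is obtained by iterating the invertible matrix
\[
M \;=\; \begin{pmatrix} 0 & 1 \\ b & a \end{pmatrix} \in \mathrm{GL}_2(\mathbb{F}_p),
\]
so $(x_n \bmod p)_{n \ge 0}$ is purely periodic with period $\pi_p = \pi_p(a,b)$ depending only on $a$ and $b$. The set of indices $n$ with $x_n \equiv 0 \pmod{p}$ is a union of arithmetic progressions modulo $\pi_p$, and each such progression translates into an $\mathbb{F}_p$-linear condition on the initial pair $(x_0, x_1)$.

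\emph{Step 2 (Covering system and CRT).} I would then seek a finite family of primes $p_1, \ldots, p_k$ (each coprime to $b$), divisors $m_i \mid \pi_{p_i}$, and residues $r_i \in \{0, \ldots, m_i - 1\}$ such that every $n \in \mathbb{Z}_{\ge 0}$ satisfies $n \equiv r_i \pmod{m_i}$ for at least one $i$. Once such a covering is in hand, the $k$ accompanying linear constraints on $(x_0, x_1)$ can be combined via the Chinese Remainder Theorem into a single affine sublattice $L \subset \mathbb{Z}^2$ of full rank. Inside $L$ one can select pairs $(x_0, x_1)$ of positive integers with $\gcd(x_0, x_1) = 1$, since coprimality is generic and $L$ remains infinite after the CRT cut. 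The exponential growth $|x_n| \gg |\alpha|^n$, with $\alpha$ the dominant root of $t^2 - at - b$, guarantees $|x_n| > \max_i p_i$ for all large $n$; the finitely many small indices can be absorbed by rescaling $(x_0, x_1)$ upward within $L$.

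\emph{Main obstacle.} The substantive step is Step 2: building the covering. The feasible moduli $m_i$ are dictated by the order of $M \bmod p_i$, so one must hunt for primes $p_i$ at which that order has divisors fitting together as a cover of $\mathbb{Z}$. Generically, periods are rich in small divisors and short coverings exist, but when $(a,b)$ is close to an exceptional pair --- in particular $(\pm 2, -1)$, where $t^2 - at - b$ has a double root --- the periodic behaviour degenerates and no such covering can be assembled from the canonical matrix-order primes. I expect the bulk of the work to be a case analysis organized by the signs of $a$ and $b$ and by small residues of $a$, together with an explanation of why exactly the two pairs $(\pm 2, -1)$ resist the method, accounting for their exclusion from the hypothesis.
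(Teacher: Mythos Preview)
Your covering-congruence plan is essentially what the paper does \emph{only} in the case $|b|=1$ (Section~5). There it is carried out exactly along the lines you describe: use the Lucas sequence $u_n$ associated to $(a,b)$, find a short covering system whose moduli $m_i$ are divisors of the order of $M$ modulo suitable primes $p_i$ (with $p_i\mid u_{m_i}$), solve for $(x_0,x_1)$ by CRT, and then appeal to strict monotonicity of $|x_n|$ (Lemma~2.2, which needs $|a|>|b|$). Two points you gloss over are handled carefully there: (i) the existence of enough distinct prime divisors of $u_4$ or $u_6$ is not automatic and is proved case by case (Lemmas~5.3 and~5.4, invoking Catalan--Mih\u{a}ilescu and Ljunggren for certain prime-power values of $|a|$); (ii) your growth claim $|x_n|\gg|\alpha|^n$ fails outright when $|a|=|b|=1$, where the sequence is periodic (period $3$ or $6$) or is the Fibonacci case itself --- these are dealt with by hand.

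For $|b|\ge 2$ the paper takes a completely different and much simpler route that your plan misses. Instead of hunting for primes coprime to $b$ and assembling a covering, one chooses $x_1\equiv 0\pmod b$, so that the recurrence immediately forces $b\mid x_n$ for all $n\ge 1$; then one only has to pick $x_0,x_1$ explicitly (as concrete polynomials in $a,b$) so that $\gcd(x_0,x_1)=1$, the first few terms are composite, and no later term equals $0,\pm b$. The last point is handled via the identity $x_{n+1}^2-ax_nx_{n+1}-bx_n^2=(-b)^n(x_1^2-ax_0x_1-bx_0^2)$ (Lemma~2.1), which reduces ``$x_{n+1}=0$'' to a Diophantine condition that is shown to be impossible by squeezing between consecutive squares. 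Your plan, by restricting to primes coprime to $b$, discards the single most useful prime in this regime; and even if one tried to push the covering method through for $|b|\ge 2$, you have given no argument that a covering built from orders of $M\bmod p$ actually exists uniformly in $(a,b)$ --- this is exactly the ``main obstacle'' you name but do not resolve.
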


Throughout the paper we will use the following convention:  a nonnegative integer $n$ is said to be \emph{composite} if $n\ne 0, 1$ and $n$ is not a prime number.

Graham \cite{graham} considered the problem above in the particular case $(a,b)=(1,1)$. He found two relatively prime positive integers $x_0$ and $x_1$ such that the sequence
$x_{n+1}=x_n+x_{n-1}$, consists of composite numbers only. Graham's starting pair is
\begin{equation*}
(x_0,x_1)=(331635635998274737472200656430763, \, 1510028911088401971189590305498785).
\end{equation*}
Graham's technique was successively refined by Knuth \cite{knuth}, Wilf \cite{wilf} , and Nicol \cite{nicol} who found smaller pairs $(x_0,x_1)$. The current record is due to Vsemirnov \cite{vsemirnov}
\begin{equation}\label{vsemirnov}
(x_0, x_1)= (106276436867, 35256392432).
\end{equation}
The above results are based on the fact that the Fibonacci sequence is a \emph{divisibility sequence}, that is, $F_n|F_m$ whenever $n|m$, and on finding a finite
\emph{covering system of congruences} $r_i \pmod {m_i}, 1\le i\le t$, such that there exist distinct primes $p_1, p_2,\ldots p_t$ so that $p_i|F_{m_i}$ for all $i=1, 2, \ldots, t$.

As Dubickas et al. we use these techniques to prove Theorem \ref{mainthm} when $|b|=1$ although our covering systems are different. For more details we refer the reader to Section  5.

We summarize our plan for proving Theorem \ref{mainthm} as follows.

In Section 2 we prove three easy lemmata which will be useful later on. We believe that Lemma \ref{lemma1} is of independent interest.

In Section 3 we study two simple cases: $(i)\,\, a=0$ and $(ii)\,\, a^2+4b=0$. In this section we also show that the condition
$(a,b)\ne (\pm 2, -1)$ is necessary.

Section 4 deals with the case $|b|\ge 2$. Following \cite{dubickas}, we will choose $x_1\equiv 0 \pmod{|b|}$, since then \eqref{recurrence} implies that $x_n\equiv 0 \pmod{|b|}$ for all $n\ge 2$. The main difficulty is to show that $x_0$ and $x_1$ can be chosen such that $x_n\neq -b, 0, b$ for every $n\ge 0$.

In this case, Dubickas et al. present a mainly existential proof which relies on a series of six lemmata. In contrast, our proof is constructive as we provide explicit expressions for $x_0$ and $x_1$ as simple polynomials of $a$ and $b$.

In Section 5 we consider the case $|b|=1$. Dubickas, Novikas and \u{S}iurys prove that except for finitely many values of $|a|$ one can take $x_0$ and $x_1$ so that each $x_n$ is divisible by one of five distinct appropriately chosen prime numbers. We prove that, with the exception of finitely many values of $|a|$,  four primes suffice.

The proof in the case $|b|=1$ relies on the divisibility properties of the \emph{Lucas sequence of the first kind} $(u_n)_{n\ge0}$ defined as
\begin{equation}\label{lucas}
u_0=0, u_1=1 \quad \text{and} \quad u_{n+1}=au_n+bu_{n-1}, \,\,\text{for} \,\, n\ge 1.
\end{equation}
Finally, in Section 6 we prove that if $|a|\ge 3$ and $b=-1$ then $u_n$ is composite for all $n\ge 3$. The interesting fact is that in this case it appears that there is no finite set of prime numbers $p_1, p_2,\ldots p_t$ such that each $u_n$ is divisible by some $p_i$, $i=1, 2, \ldots t$.

\section{\bf {Three useful lemmata}}

\begin{lemma}\label{lemma1}
Consider the sequence $(x_n)_{n\ge 0}$ given by \eqref{recurrence}. Then,
\begin{equation}\label{eq1}
x_{n+1}^2-ax_nx_{n+1}-bx_n^2=(-b)^n(x_1^2-ax_0x_1-bx_0^2)
\end{equation}
\end{lemma}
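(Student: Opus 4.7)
My plan is to prove the identity by induction on $n$, after showing that the quantity
$$Q_n := x_{n+1}^2 - a x_n x_{n+1} - b x_n^2$$
satisfies the one-step multiplicative recurrence $Q_n = -b\, Q_{n-1}$. Iterating this recurrence gives $Q_n = (-b)^n Q_0$, which is exactly \eqref{eq1}.

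The base case $n=0$ is just the tautology $Q_0 = Q_0$, so the entire content is in the inductive step. To carry it out, I would rewrite $Q_n$ by factoring out $x_{n+1}$ from the first two terms:
$$Q_n = x_{n+1}(x_{n+1} - a x_n) - b x_n^2.$$
Now \eqref{recurrence} yields $x_{n+1} - a x_n = b x_{n-1}$, and substituting gives
$$Q_n = b\bigl(x_{n-1} x_{n+1} - x_n^2\bigr).$$
Then I would use \eqref{recurrence} again to expand $x_{n+1} = a x_n + b x_{n-1}$ inside the parenthesis, obtaining
$$Q_n = b\bigl(a x_n x_{n-1} + b x_{n-1}^2 - x_n^2\bigr) = -b\bigl(x_n^2 - a x_{n-1} x_n - b x_{n-1}^2\bigr) = -b\, Q_{n-1}.$$

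This is a one-line manipulation once the reformulation $Q_n = b(x_{n-1}x_{n+1} - x_n^2)$ is in hand, so there is no real obstacle; the only thing to be careful about is the sign, which is why the factor $(-b)^n$ (and not $b^n$) appears on the right-hand side of \eqref{eq1}. Alternatively, one could prove this by writing $x_n$ explicitly in terms of the roots of the characteristic polynomial $t^2 - at - b$ and computing $Q_n$ directly, using that the product of the roots is $-b$; however, the inductive argument above avoids splitting into cases based on the discriminant $a^2 + 4b$ and works uniformly over $\mathbb{Z}$.
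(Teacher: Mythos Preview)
Your proof is correct. The inductive step $Q_n=-b\,Q_{n-1}$ is verified exactly as you wrote it, and iterating gives \eqref{eq1} immediately.

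The paper takes a different route: it encodes the recurrence as a matrix product
\[
\begin{bmatrix} x_{n+2} & x_{n+1} \\ x_{n+1} & x_n \end{bmatrix}
=
\begin{bmatrix} a & b \\ 1 & 0 \end{bmatrix}^n
\begin{bmatrix} x_2 & x_1 \\ x_1 & x_0 \end{bmatrix},
\]
takes determinants on both sides to obtain $x_{n+2}x_n - x_{n+1}^2 = (-b)^n(x_2x_0 - x_1^2)$, and then substitutes $x_{n+2}=ax_{n+1}+bx_n$ (and likewise for $x_2$) to arrive at \eqref{eq1}. Your argument is in fact the ``unpacked'' version of this: the key intermediate identity $Q_n = b(x_{n-1}x_{n+1}-x_n^2)$ that you derive is precisely the determinant relation, and your inductive step amounts to the multiplicativity of determinants applied one factor at a time. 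The matrix proof is a bit more structural and explains \emph{why} the factor $(-b)^n$ appears (it is $\det\!\left[\begin{smallmatrix} a & b \\ 1 & 0 \end{smallmatrix}\right]^n$), whereas your direct induction is more elementary and self-contained, requiring no linear algebra. Both are equally valid and roughly the same length.
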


\begin{proof}
	\begin{gather*}
		\begin{bmatrix}
		x_{n+2} & x_{n+1}\\
		x_{n+1} & x_{n}
		\end{bmatrix}
		=
		\begin{bmatrix}
		ax_{n+1}+bx_n & ax_n+bx_{n-1}\\
		x_{n+1} & x_{n}
		\end{bmatrix}
		=
		\begin{bmatrix}
		a & b\\
		1 & 0
		\end{bmatrix}
		\cdot
		\begin{bmatrix}
		x_{n+1} & x_{n}\\
		x_{n} & x_{n-1}
		\end{bmatrix}
		=
\\
\smallskip
=
		\begin{bmatrix}
		a & b\\
		1 & 0
		\end{bmatrix}
		\cdot
		\begin{bmatrix}
		a & b\\
		1 & 0
		\end{bmatrix}
		\cdot
		\begin{bmatrix}
		x_{n} & x_{n-1}\\
		x_{n-1} & x_{n-2}
		\end{bmatrix}
		=
		\begin{bmatrix}
		a & b\\
		1 & 0
		\end{bmatrix}^2
		\cdot
		\begin{bmatrix}
		x_{n} & x_{n-1}\\
		x_{n-1} & x_{n-2}
		\end{bmatrix}
		= \cdots =
		\begin{bmatrix}
		a & b\\
		1 & 0
		\end{bmatrix}^n
		\cdot
		\begin{bmatrix}
		x_2 & x_1\\
		x_1 & x_0
		\end{bmatrix}
	\end{gather*}

\smallskip
	
Taking determinants on both sides we obtain	
	\begin{align*}
		&det\begin{bmatrix}
		x_{n+2} & x_{n+1}\\
		x_{n+1} & x_{n}
		\end{bmatrix}
		=
		det\begin{bmatrix}
		a & b\\
		1 & 0
		\end{bmatrix}^n
		\cdot
		det\begin{bmatrix}
		x_2 & x_1\\
		x_1 & x_0
		\end{bmatrix}
		\longrightarrow
		x_{n+2}\cdot x_n-x_{n+1}^2=(-b)^n\cdot (x_2x_0-x_1^2)
		\longrightarrow\\
		&\longrightarrow(ax_{n+1}+bx_n)x_n-x_{n+1}^2=(-b)^n\cdot ((ax_1+bx_0)x_0-x_1^2), \quad\text{which after expanding becomes}\\
		&x_{n+1}^2-ax_nx_{n+1}-bx_n^2=(-b)^n(x_1^2-ax_0x_1-bx_0^2), \quad \text{as claimed.}
	\end{align*}
	
\end{proof}

\begin{lemma}\label{lemma2}
Consider the sequence $(x_n)_{n\ge 0}$ given by \eqref{recurrence}. Suppose that $1\le |x_0|<|x_1|$ and $|a|>|b|\ge 1$. Then the sequence $(|x_n|)_{n\ge 0}$ is strictly increasing.
\end{lemma}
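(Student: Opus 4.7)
The plan is to prove the monotonicity by induction on $n$, where the inductive claim is the slightly strengthened statement that $|x_{n+1}| > |x_n| \ge 1$ for every $n \ge 0$. The base case $n=0$ is exactly the hypothesis $1 \le |x_0| < |x_1|$. For the inductive step, I would apply the reverse triangle inequality to the defining recurrence \eqref{recurrence}:
\begin{equation*}
|x_{n+1}| \;=\; |a x_n + b x_{n-1}| \;\ge\; |a|\,|x_n| - |b|\,|x_{n-1}|.
\end{equation*}

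The key observation, and the only place where the integer hypothesis is used, is that since $|a|$ and $|b|$ are integers with $|a| > |b|$ we actually have $|a| - 1 \ge |b|$. Combined with $|b| \ge 1$ and the inductive hypothesis $|x_n| > |x_{n-1}|$, this would let me write
\begin{equation*}
|a|\,|x_n| - |b|\,|x_{n-1}| \;\ge\; (|b|+1)|x_n| - |b|\,|x_{n-1}| \;=\; |x_n| + |b|\bigl(|x_n| - |x_{n-1}|\bigr) \;>\; |x_n|,
\end{equation*}
which delivers $|x_{n+1}| > |x_n|$ and preserves the auxiliary bound $|x_{n+1}| \ge 1$ automatically, closing the induction.

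There is essentially no obstacle to this argument; the only thing requiring care is the use of integrality to upgrade the strict inequality $|a| > |b|$ to the gap $|a| - 1 \ge |b|$, since without integrality the estimate above could fail (for instance if $|a|$ were just barely above $|b|$ and the ratio $|x_{n-1}|/|x_n|$ were close to $1$). The result would in fact break down if we allowed $|a| = |b|$, since then the reverse-triangle bound only guarantees $|x_{n+1}| \ge |b|(|x_n| - |x_{n-1}|)$, which need not exceed $|x_n|$.
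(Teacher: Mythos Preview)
Your proof is correct and follows essentially the same approach as the paper: induction on $n$, the reverse triangle inequality applied to the recurrence, and the observation that integrality upgrades $|a|>|b|$ to $|a|-|b|\ge 1$. The paper writes the final estimate as $(|a|-|b|)|x_n|\ge |x_n|$ rather than your equivalent rearrangement $|x_n|+|b|(|x_n|-|x_{n-1}|)>|x_n|$, but the content is identical.
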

\begin{proof}
We use induction on $n$. By hypothesis, the statement is true for $n=0$.

Suppose that $|x_{n-1}|<|x_n|$ for some $n\ge 1$. We intend to prove that $|x_n|<|x_{n+1}|$. Indeed
\begin{align*}
|x_{n+1}|&=|ax_n+bx_{n-1}|\ge |ax_n|-|bx_{n-1}| = |a||x_n|-|b||x_{n-1}|>\\
&>|a||x_n|-|b||x_n|=(|a|-|b|)|x_n|,\,\, \text{by the induction hypothesis}
\end{align*}

and since $|a|-|b|\ge 1$ we finally have $|x_{n+1}|> |x_n|$, which completes the induction.
\end{proof}

\begin{lemma}\label{lemma3}
Let $n_1$, $n_2$ and $n_3$ be three positive integers such that no prime number $p$ divides all of them. Then, there exists an integer $k\ge 2$ such that $n_1$ and $n_2+k n_3$ are relatively prime.
\end{lemma}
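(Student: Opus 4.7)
The plan is to find $k$ by a CRT/residue-avoidance argument using the prime divisors of $n_1$. Let $p_1,\ldots,p_s$ be the distinct prime divisors of $n_1$, and for each $i$ I would analyze when $p_i \mid n_2+kn_3$.

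I would split the primes $p_i$ into two classes. First, if $p_i \mid n_3$, then since the hypothesis forbids any prime from dividing all three of $n_1,n_2,n_3$, we must have $p_i \nmid n_2$; hence $n_2+kn_3 \equiv n_2 \not\equiv 0 \pmod{p_i}$ regardless of $k$, and this prime is harmless. Second, if $p_i \nmid n_3$, then $n_3$ is invertible modulo $p_i$ and $n_2+kn_3 \equiv 0 \pmod{p_i}$ holds for exactly one residue class, namely $k \equiv -n_2 n_3^{-1} \pmod{p_i}$; we simply need to avoid this single class.

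Therefore, letting $P$ be the product of those $p_i$ with $p_i \nmid n_3$, the Chinese Remainder Theorem provides at least $\prod_{p_i \mid P}(p_i-1) \geq 1$ residue classes modulo $P$ for which $k$ yields $\gcd(n_1,n_2+kn_3)=1$. Since each such good residue class contains infinitely many positive integers, in particular arbitrarily large ones, we can certainly select a representative $k \geq 2$, which establishes the lemma.

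The only point that requires care is the observation enabled by the ``no common prime'' hypothesis, which is precisely what rules out the pathological case where every prime $p_i \mid n_1$ satisfies both $p_i \mid n_2$ and $p_i \mid n_3$ (in which case $n_2+kn_3$ would be divisible by every $p_i$ for all $k$ and no valid $k$ could exist). Beyond this, the argument is a routine application of CRT, so I do not anticipate any real obstacle.
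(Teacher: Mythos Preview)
Your argument is correct. It differs from the paper's proof, which appeals to Dirichlet's theorem on primes in arithmetic progressions: the paper sets $d=\gcd(n_2,n_3)$, observes that $\gcd(n_1,d)=1$ (else a prime would divide all three $n_i$), and then, since $\gcd(n_2/d,\,n_3/d)=1$, invokes Dirichlet to find $k$ with $n_2/d+k\,n_3/d$ a prime larger than $n_1$, so that $n_2+kn_3$ is that prime times $d$ and hence coprime to $n_1$. Your approach is more elementary: it avoids Dirichlet entirely and only needs the Chinese Remainder Theorem together with the pigeonhole-type observation that, modulo each relevant prime $p_i$ of $n_1$, at most one residue class of $k$ is forbidden. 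The paper's route has the small side benefit of producing an $n_2+kn_3$ with a simple factorization, but that is never used; for the purposes of the lemma (and its application in Lemma~\ref{last}), your CRT argument is both sufficient and arguably preferable, since it does not rest on a deep analytic result.
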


\begin{proof}
Let $d:=\gcd(n_2,n_3)$. Note that $\gcd(n_1,d)=1$ otherwise $d$ divides $n_1$, $n_2$ and $n_3$.

By Dirichlet's theorem on arithmetic progressions there exists a $k$ such that $n_2/d+k n_3/d$ is a prime number greater than $n_1$.
Then $d\,(n_2/d+k\cdot n_3/d) = n_2+k\:n_3$ is both greater and relatively prime to $n_1$.
\end{proof}

\section{\bf Two simple special cases: $(i)\,\, a=0$ and  $(ii)\,\, a^2+4b=0$.}

\emph {Case (i).} Since $a=0$, it can be easily proved that $x_{2n}=b^nx_0$ and $x_{2n+1}=b^nx_1$. It suffices to take $x_0=4$ and $x_1=9$ to obtain that $x_n$ is composite for all $n\ge 0$.

\emph {Case (ii).} If $a^2+4b=0$, then $a$ must be even, $a=2c$ and therefore $b=-a^2/4=-c^2$. We divide the proof into two cases: $|b|\ge 2$ and $b=-1$.

If $|b|\ge 2$ we have $|c|\ge 2$. Let us now take $x_0=4c^2-1$ and $x_1=2c^3$. Then $x_0$ and $x_1$ are relative prime positive composite integers. One immediately obtains $x_2=c^2$ thus $x_2$ is also composite. Also, it can be easily proved that $x_n=c^n\left((n-1)-(2n-4)c^2\right)$ for all $n\ge 3$.

Note that for any $n\ge 3$ one cannot have $x_n = 0$, otherwise $4\le c^2=(n-1)/(2n-4)\le 1$, which is impossible.
It follows that $|x_n|$ is composite for all $n\ge 3$.

If $b=-1$ then $|a|=2$. If $a=2$ then a simple induction shows that $x_{n+1}=(n+1)x_1-nx_0=x_1+n(x_1-x_0)$ for all $n\ge 0$, that is, $(x_n)_{n\ge 0}$ is an arithmetic sequence whose first term and common difference are relatively prime. By Dirichlet's theorem on primes in arithmetic progressions, it follows that $|x_n|$ is a prime number for infinitely many values of $n$.
If $a=-2$ then one can show that $x_{n+1}=(-1)^n(x_1+n(x_0+x_1))$. In this case, $(x_n)_{n\ge 0}$ is the union of two arithmetic sequences both of which have the first term and the common difference relatively prime. Again, for any choice of $x_0$ and $x_1$ relatively prime, $|x_n|$ is a prime for infinitely many $n$. This proves the necessity of the condition $(a,b)\neq(\pm 2, -1)$.

\section{{\bf The case $|b|\ge 2$}}
Based on the results from the previous section, from now on one can assume that
\begin{equation}
|a|\ge 1 \quad \text {and}\quad a^2+4b\ne 0.
\end{equation}
We divide the proof into three separate cases:

{\bf Case I:} {$|a|>|b|$}, {\bf Case II:} {$1\le |a|\le |b|$ and $|b|$ is composite,  and {\bf Case III:} {$1\le |a|\le |b|$ and $|b|$ is a prime.

{\bf{Case I: $|a|>|b|$}}

To complete the proof of Theorem \ref{mainthm} in this case we take $x_0=b^4-1, x_1=b^4$. Clearly $x_0$ and $x_1$ are both positive, relatively prime and composite. Moreover, $x_0<x_1$. Then $x_n\equiv 0\pmod{b}$ for all $n\ge1$ and by using Lemma \ref{lemma2} it follows that $(|x_n|)_{n\ge 0}$ is strictly increasing. Hence, we have that $|x_n|\ge x_0>|b|$ for all $n\ge 0$ and therefore, $|x_n|$ is composite for all $n\ge 0 $.

\smallskip

{\bf{Case II: $1\le |a|\le |b|$ and $|b|$ is composite}}

Choose $x_0=4b^4-1$, $x_1=2b^2$.
Clearly, $\gcd(x_0,x_1)=1$ and $x_0, x_1$ are positive composite integers. It also follows that $x_n\equiv 0\pmod{b}$ for all $n\ge1$ and since $|b|$ is composite it follows that $|x_n|$ is composite, unless $x_n=0$ for some $n$. We will prove that this cannot happen.

For the sake of contradiction, suppose that $x_{n+1}=0$ for some $n\ge 2$. Then, by Lemma \ref{lemma1}, we have $x_n^2=(-b)^{n-1}(x_1^2-ax_0x_1-bx_0^2)$.

Since $x_0=4b^4-1$ and $x_1=2b^2$ we have $x_1^2-ax_0x_1-bx_0^2=(-b)(16b^8+8ab^5-8b^4-4b^3-2ab+1)$, \text{which implies that}
\begin{equation}\label{xnsquared}
x_n^2=(-b)^n(16b^8+8ab^5-8b^4-4b^3-2ab+1).
\end{equation}
If $n$ is even, this implies that $16b^8+8ab^5-8b^4-4b^3-2ab+1$ is a perfect square. If $n$ is odd, this implies that $(-b)(16b^8+8ab^5-8b^4-4b^3-2ab+1)$ is a perfect square. We will prove that none of these are possible if $a^2+4b\ne0$.
	
Note first that since $|a|\le |b|$ we have
\begin{equation}\label{cII}
(4b^4+ab-2)^2<16b^8+8ab^5-8b^4-4b^3-2ab+1<(4b^4+ab)^2
\end{equation}
Indeed, these inequalities are equivalent to the following ones
\begin{equation}\label{cIIa}
8b^4-2-(4b^3+(ab-1)^2)>0, \quad \text{and}\quad 8b^4+4b^3-2+(ab+1)^2>0.
\end{equation}
 We have
\begin{equation}
|a|\le |b| \rightarrow |ab|\le b^2 \rightarrow |ab - 1|\le |ab|+1\le b^2+1 \rightarrow (ab- 1)^2\le b^4+2b^2+1.
\end{equation}
From this we obtain that
\begin{equation}
|4b^3+(ab-1)^2|\le 4|b|^3 + (ab-1)^2 \le b^4+4|b|^3+2b^2+1.
\end{equation}
To prove the first inequality in \eqref{cIIa} note that
\begin{align*}
&8b^4-2-(4b^3+(ab-1)^2)\ge 8b^4-2-|4b^3+(ab-1)^2|\ge\\
&\ge 8b^4-2-(b^4+4|b|^3+2b^2+1)= 7b^4-4|b|^3-2b^2-3=\\
&=7|b|^4-4|b|^3-2|b|^2-3=4|b|^3(|b|-1)+2|b|^2(|b|^2-1)+(|b|^4-3)\ge\\
&\ge 4|b|^3+2|b|^2+|b|^4-3\ge 4\cdot 2^3+2\cdot 2^2+2^4-3>0.
\end{align*}
The second inequality in \eqref{cIIa} is much easier to prove:
\begin{equation*}
8b^4+4b^3-2+(ab+1)^2\ge 8b^4+4b^3-2\ge 8|b|^4-4|b|^3-2\ge 8\cdot 2^4-4\cdot 2^3-2>0.
\end{equation*}
Hence, the inequalities \eqref{cII} hold true.
	
If the middle term in \eqref{cII} were to be a perfect square, the only option remaining is
\begin{equation*}
16b^8+8ab^5-8b^4-4b^3-2ab+1=(4b^4+ab-1)^2,\,\,\text{which implies}\,\, b^2(a^2+4b)=0.
\end{equation*}
However, this is impossible if $a^2+4b\ne 0$. Hence, if one assumes that  $|a|\le |b|$ and $a^2+4b\ne 0$ then $16b^8+8ab^5-8b^4-4b^3-2ab+1$ cannot be a perfect square.
	
Suppose next that $(-b)(16b^8+8ab^5-8b^4-4b^3-2ab+1)$ is a perfect square. Since the two factors are mutually prime, it follows that both have to be perfect squares. In particular, the second one has to be a perfect square, and we have already proved that it cannot happen. It follows that the sequence $(x_n)$ does not contain any terms equal to $0$. This completes the proof of Theorem \ref{mainthm} in case II.

\bigskip

{\bf{Case III: $1\le |a|\le |b|$ and $|b|$ is a prime}}

We divide the proof of this case into three subcases: $|a|=1$, $|a|=|b|$ and $2\le |a|<|b|$.

{\bf{Subcase IIIa:  $|a|=1$, $|b|$ is a prime}}

If $a=1$ and $b>0$ then take $x_0=(2b^2-1)^2, x_1=b(b^2-1)$. Clearly, $x_0$ and $x_1$ are positive, relatively prime, composite integers. It follows immediately that $x_2=b^3(4b^2-3)$ is also composite. Finally, an easy induction shows that $x_n\equiv 0\pmod{b^2}$ and that $\frac{x_n}{b^2} \equiv -1\pmod{b}$ for all $n\ge 3$. Hence, $|x_n|$ is necessarily composite for all $n\ge 3$.

The other situations can be dealt with similarly.

If $a=-1$ and $b<0$ take $x_0=(2b^2-1)^2, x_1=-b(b^2-1)$. Then $x_2=b^3(4b^2-3)$ and for $n\ge 3$ one can show that $x_n\equiv 0\pmod{b^2}$ and \(\frac{x_n}{b^2}\)$\equiv (-1)^{n+1}\pmod{b}$. Again, all $|x_n|$ are composite.

If $a=1$ and $b<0$ we take $x_0=(2b^2-1)^2, x_1=-b(b^2+1)$. Then $x_2=b^3(4b^2-5)$ and for all $n\ge 3$ one can prove that $x_n\equiv 0\pmod{b^2}$ and \(\frac{x_n}{b^2}\)$\equiv -1\pmod{b}$. Hence, all $|x_n|$ are composite.

Finally, if $a=-1$ and $b>0$ select $x_0=(2b^2-1)^2, x_1=b(b^2+1)$. Then $x_2=b^3(4b^2-5)$ and for all $n\ge 3$ we have that $x_n\equiv 0\pmod{b^2}$ and \(\frac{x_n}{b^2}\)$\equiv (-1)^{n+1}\pmod{b}$. All $|x_n|$ are composite.

This completes the proof of Theorem in subcase IIIa.

\smallskip

{\bf{Subcase IIIb:  $|a|=|b|$, $|b|$ is a prime}}

If $a=b$, take $x_0=4b^4-1, x_1=2b^2$. Then $x_2=b(4b^4+2b^2-1)$ is composite and $x_n\equiv 0\pmod{b^2}$ for all $n\ge3$. It remains to show that $x_n\ne0$ for all $n$.

As in Case II, $x_{n+1}=0$ implies that either $16b^8+8b^6-8b^4-4b^3-2b^2+1$ or $(-b)(16b^8+8b^6-8b^4-4b^3-2b^2+1)$ is a perfect square. One can use the same technique as in Case II to prove that this cannot happen if $a^2+4b=b^2+4b\ne0$.

If $a=-b$, take $x_0=4b^4-1, x_1=2b^2$ (same choice). Then $x_2=b(4b^4-2b^2-1)$ is composite and $x_n\equiv 0\pmod{b^2}$ for all $n\ge3$. It remains to show that $x_n\ne0$ for all $n$.

As in Case II, $x_{n+1}=0$ implies that either $16b^8-8b^6-8b^4-4b^3+2b^2+1$ or $(-b)(16b^8-8b^6-8b^4-4b^3+2b^2+1)$ is a perfect square. The same approach as in Case II shows that this is impossible if $a^2+4b=b^2+4b\ne0$.

\smallskip

{\bf{Subcase IIIc:  $2\le|a|<|b|$, $|b|$ is a prime}}

It follows that $\gcd(a,b)=1$.

If $a>0, b>0$, take $x_0=a^3, x_1=b(b^2-a^2)$. Then $x_2=ab^3$. For $n\ge3$, $x_n\equiv 0\pmod{b^2}$ and \(\frac{x_n}{b^2}\)$\equiv -a^{n-1}\pmod{b}$. Since $\gcd(a,b)=1$, $x_n\ne0$.

If $a<0, b<0$, take $x_0=-a^3, x_1=-b(b^2-a^2)$. Then $x_2=-ab^3$. For $n\ge3$, $x_n\equiv 0\pmod{b^2}$ and \(\frac{x_n}{b^2}\)$\equiv a^{n-1}\pmod{b}$. Hence, $x_n\ne0$ for all $n$.

If $a<0, b>0$, take $x_0=-a^3, x_1=b(b^2+a^2)$. Then $x_2=ab^3$ and for $n\ge3$, $x_n\equiv 0\pmod{b^2}$ and \(\frac{x_n}{b^2}\)$\equiv a^{n-1}\pmod{b}$. So, $x_n\ne0$ for all $n$.

If $a>0, b<0$, take $x_0=a^3, x_1=-b(b^2+a^2)$. Then $x_2=-ab^3$. For $n\ge3$, $x_n\equiv 0\pmod{b^2}$ and \(\frac{x_n}{b^2}\)$\equiv -a^{n-1}\pmod{b}$. Since $\gcd(a,b)=1$, it follows that $x_n\ne0$ for all $n$.

Hence, in each case one can choose $x_0$ and $x_1$ positive, composite and relatively prime so that $x_2$ is composite, and for all $n\ge 3$ we have that $x_n\equiv 0\pmod{b^2}$ and $x_n\neq 0$. It follows that $|x_n|$ is composite for all $n\ge 0$ and this completes the proof of Theorem \ref{mainthm} in the case $|b|\ge 2$.

\section{\bf {The case $|b|$=1}}

The main idea behind the proof of Theorem \ref{mainthm} in this particular case can be summarized as follows: we want to find a finite set of primes $p_1, p_2, \ldots, p_t$ such that
for every $n\ge 0$ we have that $|x_n|$ is divisible by at least one of these primes.

We start the analysis with the simple case when $|a|$ has at least two distinct prime factors.
\begin{lemma}
Let $|b|=1$ and suppose that $|a|$ has at least two distinct prime factors: $p_1$ and $p_2$ with $p_1<p_2$. Then, the sequence given by \eqref{recurrence} and the initial terms $x_0=p_1^2$, $x_1=p_2^2$ satisfies the conditions in Theorem \ref{mainthm}.
\end{lemma}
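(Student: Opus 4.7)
The plan is to use a two-prime covering argument: show that $p_1$ divides every even-indexed term of the sequence, while $p_2$ divides every odd-indexed term, and then invoke Lemma \ref{lemma2} to guarantee that each $|x_n|$ exceeds the corresponding prime divisor.

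First I would check the basic properties of the initial pair. Since $p_1\neq p_2$ are distinct primes, $\gcd(p_1^2,p_2^2)=1$ and both $x_0$ and $x_1$ are positive composite integers. Because $|a|$ has $p_1$ and $p_2$ as distinct prime factors, $|a|\ge p_1 p_2\ge 2\cdot 3=6>1=|b|$, and since $p_1<p_2$ we have $|x_0|=p_1^2<p_2^2=|x_1|$. Thus the hypotheses of Lemma \ref{lemma2} are satisfied, so $(|x_n|)_{n\ge 0}$ is strictly increasing, and in particular $|x_n|\ge p_1^2\ge 4$ for every $n\ge 0$.

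Next I would exploit the fact that $p_1\mid a$ and $p_2\mid a$ to simplify the recurrence modulo each prime. Reducing \eqref{recurrence} modulo $p_1$ gives $x_{n+1}\equiv b\,x_{n-1}\pmod{p_1}$, and with the starting values $x_0\equiv 0$, $x_1\equiv p_2^2\pmod{p_1}$, a one-line induction yields
\begin{equation*}
x_{2k}\equiv 0\pmod{p_1},\qquad x_{2k+1}\equiv b^k p_2^2\pmod{p_1}
\end{equation*}
for every $k\ge 0$. By the symmetric argument modulo $p_2$, using $x_0\equiv p_1^2$, $x_1\equiv 0\pmod{p_2}$, I obtain
\begin{equation*}
x_{2k+1}\equiv 0\pmod{p_2},\qquad x_{2k}\equiv b^k p_1^2\pmod{p_2}.
\end{equation*}

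Finally, I would put the pieces together. For every $n\ge 0$, one of the primes $p_1$ (if $n$ is even) or $p_2$ (if $n$ is odd) divides $x_n$. Since $|x_n|\ge p_1^2>p_1$ and $|x_n|\ge p_1^2>p_1$ means $|x_n|\ge 2p_1$ in the even case (as it is a positive multiple of $p_1$ strictly greater than $p_1$), and analogously $|x_n|$ is a multiple of $p_2$ strictly greater than $p_2$ in the odd case (using the strict monotonicity $|x_n|\ge |x_1|=p_2^2>p_2$ when $n\ge 1$), each $|x_n|$ has a proper prime divisor and is therefore composite. There is no real obstacle in this lemma; the only point that needs care is confirming that $|x_n|$ cannot coincide with $p_1$ or $p_2$ themselves, which is handled entirely by Lemma \ref{lemma2}.
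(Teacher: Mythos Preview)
Your proof is correct and follows essentially the same route as the paper: establish the parity-based divisibility pattern $p_1\mid x_{2k}$, $p_2\mid x_{2k+1}$ via the reduction $x_{n+1}\equiv b x_{n-1}$ modulo each prime, then invoke Lemma~\ref{lemma2} to ensure $(|x_n|)$ is strictly increasing so no term can equal its prime divisor. You simply spell out in more detail what the paper compresses into the phrase ``an easy induction.''
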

\begin{proof} Clearly, $x_0$ and $x_1$ are positive relatively prime composite integers. An easy induction shows that $x_n\equiv 0 \pmod{p_1}$ is $n$ is even and $x_n\equiv 0 \pmod{p_2}$ if $n$ is odd. Since $|x_0|<|x_1|$ and $1=|b|<|a|$ the hypotheses of Lemma \ref{lemma2} are satisfied hence $(|x_n|)_{n\ge 0}$ is strictly increasing. It follows that $x_n$ is composite for all $n\ge 0$ and therefore Theorem \ref{mainthm} is verified in this case.
\end{proof}
\begin{remark}
For the remainder of the paper we will assume that $|a|=p_1^s$ for some prime $p_1$ and some nonnegative integer $s\ge 0$. Note that this allows the possibility that $|a|=1$.
\end{remark}
 Next, we introduce a special sequence $(u_n)_{n\ge 0}$ given by
\begin{equation}\label{lucassequence}
u_0=0, u_1=1,\quad  u_{n+1}=au_n+bu_{n-1} \quad \text{for all} \,\,n\ge 1.
\end{equation}
 This sequence is called the \emph{Lucas sequence of the first kind}.

One can prove that $(u_n)_{n\ge 0}$ is a \emph{divisibility sequence}, that is, $u_m$ divides $u_n$ whenever $m$ divides $n$.
Indeed, suppose that $a^2+4b\neq 0$ which implies that the roots $\alpha$ and $\beta$ of the characteristic equation are distinct $\alpha\neq \beta$.
Assume that $n=mk$. Then
\begin{align*}
u_n=\frac{\alpha^n-\beta^n}{\alpha-\beta}= \frac{\alpha^{mk}-\beta^{mk}}{\alpha-\beta}&= \frac{\alpha^m-\beta^m}{\alpha-\beta}\cdot\left(\alpha^{m(k-1)}+\alpha^{m(k-2)}\beta^m+\ldots+\beta^{m(k-1)}\right)\\
&=u_m\left(\alpha^{m(k-1)}+\alpha^{m(k-2)}\beta^m+\ldots+\beta^{m(k-1)}\right).
\end{align*}
The second factor of the last term in the equality above is a symmetric function in $\alpha$ and $\beta$ and therefore it can be written as a polynomial function of $\alpha+\beta=a$ and $\alpha\beta=-b$. It follows that $u_n/u_m$ is an integer as claimed.

Of particular interest in the sequel are the values of $u_4$ and $u_6$.
\begin{equation}\label{u4u6}
u_4=a(a^2+2b)\quad {\text and} \quad u_6=a(a^2+b)(a^2+3b).
\end{equation}
The next two results give information regarding the prime factors of $u_4$ and $u_6$ when $|b|=1$.

\begin{lemma}\label{bminus1} Suppose that $b=-1$, $|a|\ge 4$ and  $|a|=p_1^s$ for some prime $p_1$ and some $s\ge 1$. Then $u_6=a(a^2-1)(a^2-3)$ has at least three additional distinct prime factors $p_2, p_3$ and $p_4$, all three different from $p_1$.
\end{lemma}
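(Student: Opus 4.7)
The plan is to locate three pairwise distinct primes $p_2,p_3,p_4$ inside the factorization $u_6 = a\,(a^2-1)\,(a^2-3) = a\,(a-1)(a+1)(a^2-3)$, each different from $p_1$, by exploiting the fact that the four factors $a,\ a-1,\ a+1,\ a^2-3$ are almost pairwise coprime. Specifically, $\gcd(a,a-1)=\gcd(a,a+1)=1$, $\gcd(a,a^2-3)\mid 3$, $\gcd(a-1,a+1)\mid 2$, and $\gcd(a\pm 1,a^2-3)\mid 2$ (via $a^2-3\equiv -2\pmod{a\pm 1}$). Thus the only primes that could appear in more than one of these factors are $2$ and, in one scenario, $3$; once those overlaps are handled the three primes can be harvested independently. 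I would split the argument on the parity of $a$, equivalently on whether $p_1=2$.

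When $p_1=2$, each of $a-1,\ a+1,\ a^2-3$ is odd, so the pairwise gcds above collapse to $1$ and these three factors are pairwise coprime. Since $|a|\ge 4$, their absolute values are at least $3,\ 3,\ 13$ respectively, so each admits an odd prime divisor; pairwise coprimality makes these three chosen primes distinct, and all are odd, hence distinct from $p_1=2$.

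When $p_1$ is odd I would take $p_2=2$, which divides $a^2-1$ (as $a^2\equiv 1\pmod 8$) and clearly differs from $p_1$. For $p_3$, I would choose any odd prime dividing $a^2-1$, noting that the odd part of $a^2-1$ cannot equal $1$: otherwise $(a-1)(a+1)$ would be a power of $2$, but two numbers differing by $2$ are both powers of $2$ only when $\{|a-1|,|a+1|\}=\{2,4\}$, forcing $|a|=3$ and contradicting $|a|\ge 5$. Since $\gcd(a,a^2-1)=1$, this $p_3$ automatically differs from $p_1$. For $p_4$, I use that $a^2-3\equiv 6\pmod 8$, so $m:=(a^2-3)/2$ is an odd integer of size at least $11$; the coprimality $\gcd(a^2-1,a^2-3)\mid 2$ yields $\gcd(m,a^2-1)=1$, hence any prime divisor $p_4$ of $m$ is automatically distinct from $2$ and from $p_3$.

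The main obstacle is the final verification that $p_4\ne p_1$. When $p_1\ne 3$ this is immediate, since then $\gcd(a,a^2-3)=1$. The one delicate sub-case is $p_1=3$, where $|a|=3^s$ with $s\ge 2$, giving $a^2-3 = 3(3^{2s-1}-1)$ and $m = 3\cdot(3^{2s-1}-1)/2$. Here I would verify that $(3^{2s-1}-1)/2$ is odd (since $3^{2s-1}\equiv 3\pmod 4$), coprime to $3$ (since $3^{2s-1}\equiv 0\pmod 3$), and at least $13$, so it admits a prime factor $p_4$ different from $3=p_1$. This is the only sub-case requiring genuine attention; all remaining conclusions follow immediately from the coprimality observations collected at the outset.
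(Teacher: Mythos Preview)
Your argument is correct. Both you and the paper exploit the same near-coprimality among $a$, $a^2-1$, and $a^2-3$, together with the observation that $a^2-1$ cannot be a prime power once $|a|\ge 4$; so the core idea is identical. The paper's proof, however, is more economical: it avoids your parity split entirely by choosing $p_4$ at the outset to be an odd prime divisor of $a^2-3$ \emph{different from $3$} (this exists because $4\nmid a^2-3$, $9\nmid a^2-3$, and $a^2-3\ge 13$), which immediately gives $p_4\ne p_1$ since $\gcd(a,a^2-3)\in\{1,3\}$, and then takes $p_2,p_3$ as two distinct prime factors of $a^2-1$. Your case split on whether $p_1=2$, and the further sub-case $p_1=3$, are unnecessary once $p_4$ is selected this way; what you gain from your organization is slightly more explicitness (e.g., the identification $p_2=2$ when $a$ is odd), at the cost of a longer proof.
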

\begin{proof}
Note that $a^2-3$ is not divisible by 4 or 9. Since $a^2-3\ge 13$ it follows that $a^2-3$ has an odd prime factor $p_4$ different from 3, and since $\gcd(a, a^2-3)\in\{1,3\}$ we have that $p_4\neq p_1$.
Clearly, neither $p_1$ nor $p_4$ are factors of $a^2-1$. Note that $a^2-1$ has at least two distinct prime factors. Indeed, is one assumes the opposite then both $|a-1|$ and $|a+1|$ are powers of some prime. But this cannot happen since $|a|\ge 4$. It follows that $a^2-1$ has at least two distinct prime factors $p_2$ and $p_3$ both different from $p_1$ and $p_4$.
\end{proof}
To illustrate Lemma \ref{bminus1} let us take $(a,b)=(-9,-1)$. Then $a^2-3=78=2\cdot3\cdot13$ while $a^2-1=80=2^4\cdot 5$. Hence, one has $p_1=3$, $p_4=13$, $p_2=2$, and $p_3=5$.

\begin{lemma}\label{bplus1} Suppose that $b=1$, $|a|\ge 6$ and $|a|=p_1^s$ for some prime $p_1$ and some $s\ge 1$.

If $p_1\neq 3$ then $u_4=a(a^2+2)$ has at least two additional distinct prime factors $p_2$ and $p_3$ both different from $p_1$.

If $|a|=3^s$ for some $s\ge 2$ then $u_6=a(a^2+1)(a^2+3)$ has at least three additional distinct prime factors $p_2$, $p_3$ and $p_4$ all three different from $p_1$.
\end{lemma}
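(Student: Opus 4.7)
The plan is to handle the two cases separately, extracting primes from the natural factorizations of $a^2+2$ (in the first case) and of $(a^2+1)(a^2+3)$ (in the second), using only congruences, $\gcd$ computations, and one Diophantine input.

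\textbf{Case $p_1\neq 3$.} Since $|a|=p_1^s$ with $p_1\neq 3$, the integer $a$ is coprime to $3$, so $a^2\equiv 1\pmod 3$ and hence $3\mid a^2+2$. The goal is to produce a second prime factor of $a^2+2$ distinct from $3$ and from $p_1$. When $p_1$ is odd (and $\ne 3$), $a$ is also odd, so $\gcd(a,a^2+2)=1$, and any prime factor of $a^2+2$ is automatically $\neq p_1$; what remains is to rule out $a^2+2=3^k$. I would do this by factoring in the principal ideal domain $\mathbb{Z}[\sqrt{-2}]$: since $3=(1+\sqrt{-2})(1-\sqrt{-2})$ and the conjugate factors of $a^2+2$ are coprime, one obtains $a+\sqrt{-2}=\pm(1+\sqrt{-2})^k$, and a short induction on the recursion for the two coefficients of $(1+\sqrt{-2})^k$ shows the imaginary coefficient equals $\pm 1$ only for $k=1$ (giving $a=\pm 1$) and $k=3$ (giving $a=\pm 5$), both excluded by $|a|\geq 6$. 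When $p_1=2$, $|a|=2^s$ forces $s\geq 3$, and $a^2+2=2(2^{2s-1}+1)$; the odd factor is divisible by $3$, and to see that it is not a pure power of $3$ I would invoke Mihailescu's theorem on $3^k-2^{2s-1}=1$, whose only solutions $(k,2s-1)\in\{(1,1),(2,3)\}$ give $s\in\{1,2\}$, both excluded.

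\textbf{Case $|a|=3^s$, $s\ge 2$.} Here I would exploit the complementary $2$- and $3$-adic behaviour of $a^2+1$ and $a^2+3$. Since $a$ is odd, $a^2\equiv 1\pmod 8$ yields $a^2+1=2\alpha$ and $a^2+3=4\gamma$ with $\alpha,\gamma$ odd; since $3\mid a$, one has $3\nmid a^2+1$ and $a^2+3\equiv 3\pmod 9$, so $\gamma=3\gamma'$ with $\gcd(\gamma',3)=1$ and $\gamma'$ odd. From $\gcd(a^2+1,a^2+3)\mid 2$ it follows that $\alpha$ and $\gamma'$ are coprime, and both are coprime to $2$, to $3$, and to $a$. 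For $|a|\ge 9$ one has $\alpha\ge 41$ and $\gamma'\ge 7$, so each of $\alpha,\gamma'$ contributes at least one prime factor; the two resulting primes are distinct (by coprimality) and are different from $2$ and from $3=p_1$. Writing $u_6=a(a^2+1)(a^2+3)=\pm 2^{3}\cdot 3^{s+1}\cdot\alpha\cdot\gamma'$, the three primes $2$, a prime of $\alpha$, and a prime of $\gamma'$ supply the three additional distinct prime factors required.

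The non-trivial Diophantine step is showing $a^2+2\neq 3^k$ in the odd-$p_1$ subcase of Case A, and $2^{2s-1}+1\neq 3^k$ (equivalently $3^k-2^{2s-1}\neq 1$) in the $p_1=2$ subcase; these are the only parts of the argument that require anything beyond congruences, and I would settle them by the $\mathbb{Z}[\sqrt{-2}]$ computation and by Mihailescu's theorem, respectively. Everything else---the divisibilities of $a^2+2$, $a^2+1$, $a^2+3$ by small primes, the $2$-adic valuations, and the coprimality of $\alpha$ and $\gamma'$---reduces to elementary bookkeeping, as outlined above.
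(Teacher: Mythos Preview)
Your approach is essentially the same as the paper's: split on $p_1=2$ versus $p_1$ odd in the first case (invoking Mih\u{a}ilescu for the former), and in the second case extract one odd prime from each of $(a^2+1)/2$ and $(a^2+3)/12$ after computing the $2$- and $3$-adic valuations---this is exactly what the paper does, phrased via $a^2+1\equiv 10$ and $a^2+3\equiv 12\pmod{72}$.

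The one substantive difference is how you handle $a^2+2=3^k$ for odd $p_1\neq 3$. The paper simply cites Ljunggren's theorem that $x^2+2=y^n$ has only the solution $(x,y,n)=(5,3,3)$. Your proposal to factor in $\mathbb{Z}[\sqrt{-2}]$ and reduce to showing the $\sqrt{-2}$-coefficient $B_k$ of $(1+\sqrt{-2})^k$ equals $\pm 1$ only for $k\in\{1,3\}$ is exactly how the Nagell--Ljunggren argument proceeds, but calling the last step ``a short induction'' understates it: the sequence $B_k$ satisfies $B_{k+2}=2B_{k+1}-3B_k$, and showing $|B_k|>1$ for all $k\ge 4$ is not a routine monotonicity induction (the sequence oscillates in sign and $|B_k|$ is not monotone). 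One does need a genuine Diophantine argument here---typically a $3$-adic or congruence analysis of the recursion---so either carry that out explicitly or, as the paper does, cite the known result.
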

\begin{proof}
Suppose first that $p_1=2$, that is, $|a|=2^s$ for some $s\ge 3$. Note that $a^2+2\equiv 0 \pmod 3$, hence one can choose $p_2=3$. Note that $a^2=2(2^{2s-1}+1)$ must have a prime factor different from $2$ and $3$. Indeed, if one assumes the opposite, then $2^{2s-1}+1=3^t$ for some $t\ge 2$. However, under the assumption that $2s-1\ge 2$ and $t\ge 2$ the Catalan-Mih\u{a}ilescu theorem implies that the only solution of this equation is $s=2, t=2$. But this implies that $|a|=2^2=4$, contradiction.

Consider next the case $|a|=p_1^s$ with $p_1>3$. In particular, $a^2+2$ is odd and therefore $\gcd(a, a^2+2)=1$. Moreover, since $a^2+2 \equiv 0 \pmod 3$ one can safely take $p_2=3$.
We claim that $a^2+2$ has at least one other prime factor $p_3$ different from $p_1$ and $p_2$. Indeed, if one assumes otherwise then $a^2+2= p_1^{2s}+2=3^t$ for some $t\ge 2$.
However, it was shown by Ljunggren \cite{ljunggren} that the more general equation $x^2+2= y^n$, $n\ge 2$ has the unique solution $x=5, y=3, n=3$. This would give $|a|=5$ which we excluded from our analysis.

Finally, suppose that $p_1=3$ and therefore $|a|=3^s$ for some $s\ge 2$. Then both $a^2+1$ and $a^2+3$ are even so one can take $p_2=2$. Note that $a^2+1=9^s+1$ hence $a^2+1\equiv 10 \pmod {72}$ and $a^2+3\equiv 12 \pmod {72}$. Hence, there exists a positive integer $c$ such that $a^2+1=2(36c+5)$ and $a^2+3=12(6c+1)$. Let $p_3$ be a prime factor of $36c+5$ and let $p_4$ be a prime factor of $6c+1$. It follows immediately that $p_1=3, p_2=2, p_3$, and $p_4$ are all distinct.
\end{proof}

We present a couple of particular instances covered by the above lemma.

Suppose first that $a=8$ and $b=1$. Then $u_2=a=2^3$ while $u_4=2^4\cdot 3\cdot 11$. Thus, in this case one can take $p_1=2, p_2=3, p_3=11$.

Suppose next that $a=-49$ and $b=1$. Then $u_2=a=-7^2$ while $u_4=-7^2\cdot 3^3\cdot 89$. It follows that we can choose $p_1=7, p_2=3, p_3=89$.

Finally, let us assume that $a=9$ and $b=1$. Then $u_2=a=3^2$ while $a^2+1=2\cdot 41$, $a^2+3=2^2\cdot 3\cdot 7$. Hence, in this case we have $p_1=3, p_2=2, p_3=41, p_4=7$.

The next lemma uses the concept of covering system introduced by Erd\H{o}s in \cite{erdos}.
\begin{df} A collection of residue classes $r_i \pmod{m_i}$, $0\le r_i<m_i$, where $1\le i\le t$ is said to be a \emph{covering system} if every integer belongs to at least one residue class $r_i\pmod{m_i}$.
\end{df}
In the proof of the theorem when $|b|=1$, except for finitely many values of $a$ we will use one of the following two covering systems
\begin{align*}
&0 \pmod{2}, \quad 1 \pmod{6},\quad 3\pmod{6},\quad 5\pmod{6}\\
&0 \pmod{2}, \quad 1\pmod{4}, \quad 3\pmod{4}.
\end{align*}
\begin{lemma}\label{last}
Let $a, b$ be two integers such that $|a|\ge 2$ and $|b|=1$. Let $(u_n)_{n\ge 0}$ be the Lucas sequence defined in \eqref{lucassequence} and  suppose that there exists a finite collection of triples $(p_i, m_i, r_i)$, $1\le i\le t$ with the following properties
\begin{itemize}
\item[$(i)$]{All primes $p_i$ are distinct.}
\item[$(ii)$]{The residue classes $r_i \pmod{m_i}$ form a covering system.}
\item[$(iii)$]{$p_i|u_{m_i}$ for all $1\le i\le t$.}
\end{itemize}
Then there exist two relatively prime positive integers $x_0$ and $x_1$ such that each term of the sequence $(x_n)_{n\ge 0}$ defined in \eqref{recurrence} is composite.
\end{lemma}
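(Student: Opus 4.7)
The plan is to select the pair $(x_0,x_1)$ so that for every $n\ge 0$ some $p_i$ divides $x_n$, and so that $|x_n|$ exceeds $\max_i p_i$ for every $n$. Since the covering property guarantees that each $n\ge 0$ lies in at least one class $r_i\pmod{m_i}$, it suffices to arrange, for each $i$, that $p_i\mid x_n$ for every $n\equiv r_i\pmod{m_i}$.

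The key reduction, valid whenever a prime $p\nmid b$ satisfies $p\mid u_m$, is that $w_{n+m}\equiv \lambda\,w_n\pmod{p}$ for any sequence $(w_n)$ obeying $w_{n+1}=aw_n+bw_{n-1}$, with $\lambda:=u_{m+1}\not\equiv 0\pmod p$. Indeed $(u_{n+m})_n$ satisfies the same recurrence with initial terms $u_m\equiv 0$, $u_{m+1}\equiv \lambda$, so by linearity $u_{n+m}\equiv \lambda u_n\pmod p$; moreover $\lambda\not\equiv 0$, since two consecutive $u$-terms vanishing mod $p$ would, by running the recurrence forward and backward, force $u_1=1\equiv 0$. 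The standard identity $x_n=u_n x_1+b\,u_{n-1}x_0$ (easy induction on $n\ge 1$) transports the relation to $(x_n)$, so the condition $p_i\mid x_n$ for every $n\equiv r_i\pmod{m_i}$ collapses to the single linear congruence $p_i\mid x_{r_i}$, namely $u_{r_i}x_1+b\,u_{r_i-1}x_0\equiv 0\pmod{p_i}$.

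This congruence has a nonzero coefficient pair modulo $p_i$ (again by the consecutive-zero argument), so its solution set is a line through the origin in $(\mathbb{Z}/p_i)^2$. Since the $p_i$ are distinct, CRT produces a pair $(a_0,a_1)\pmod{M}$, $M:=p_1\cdots p_t$, satisfying all $t$ congruences; on each line I choose a representative with as few zero coordinates as possible, noting that at most one of $a_0$ or $a_1$ is forced to vanish mod $p_i$, and never both. Fix $x_0:=a_0+jM$ with $j$ large enough that $x_0>\max_i p_i$, then apply Lemma \ref{lemma3} to $(x_0,a_1,M)$ to obtain $k\ge 2$ with $x_1:=a_1+kM$ coprime to $x_0$; the hypothesis of Lemma \ref{lemma3} holds because any common prime of $x_0,a_1,M$ would be some $p_i$ with $p_i\mid a_0$ and $p_i\mid a_1$, a case already excluded. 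Enlarging $k$ further guarantees $1\le x_0<x_1$, so Lemma \ref{lemma2} (applicable as $|a|\ge 2>1=|b|$) makes $(|x_n|)$ strictly increasing, giving $|x_n|\ge x_0>\max_i p_i$. Each $|x_n|$ is then divisible by some $p_i$ but strictly larger than it, hence composite.

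The main subtlety is the residue bookkeeping in the CRT step: the $t$ divisibility conditions must be stitched together in a way that still allows $\gcd(x_0,x_1)=1$, which is exactly what the combination of nondegenerate line representatives and Lemma \ref{lemma3} handles. The growth bound from Lemma \ref{lemma2} then does all the remaining work in ruling out the exceptional values $|x_n|\in\{0,1,p_1,\ldots,p_t\}$.
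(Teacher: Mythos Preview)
Your proof is correct and follows the same overall architecture as the paper's: impose a congruence on $(x_0,x_1)$ modulo each $p_i$ so that $p_i\mid x_n$ whenever $n\equiv r_i\pmod{m_i}$, glue these via CRT, use Lemma~\ref{lemma3} to achieve $\gcd(x_0,x_1)=1$, and use Lemma~\ref{lemma2} to force $|x_n|>\max_i p_i$.

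The one genuine difference is in how the mod-$p_i$ condition is set up. The paper makes the concrete choice $x_0\equiv u_{m_i-r_i}$, $x_1\equiv u_{m_i-r_i+1}\pmod{p_i}$, i.e.\ it simply shifts the Lucas sequence so that the known zero $u_{m_i}$ lands at index $r_i$; the verification that $x_n\equiv u_{m_i-r_i+n}$ is then a one-line induction. You instead prove the multiplicative periodicity $x_{n+m_i}\equiv u_{m_i+1}\,x_n\pmod{p_i}$ and reduce to the single linear condition $u_{r_i}x_1+b\,u_{r_i-1}x_0\equiv 0\pmod{p_i}$, then pick a nonzero point on that line. The paper's specific pair is in fact a point on your line (via the addition formula $u_{m_i}=u_{r_i}u_{m_i-r_i+1}+b\,u_{r_i-1}u_{m_i-r_i}$), so the two constructions coincide up to the freedom you identify. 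Your formulation is a bit more conceptual; the paper's is a bit more explicit and sidesteps the bookkeeping about choosing line representatives. A small technical point: when you invoke Lemma~\ref{lemma3} with $n_2=a_1$, make sure $a_1>0$ (take the representative in $\{1,\dots,M\}$ rather than $\{0,\dots,M-1\}$); this is harmless but worth stating.
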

\begin{proof}
Let $P=p_1p_2\ldots p_t$. By the Chinese remainder Theorem, there exist $y, z \in \{0, 1, \ldots P-1\}$ satisfying
\begin{align}\label{congruences}
&y\equiv u_{m_i-r_i} \pmod{p_i},\notag\\
&z\equiv u_{m_i-r_i+1} \pmod{p_i}
\end{align}
\noindent for $i=1, 2,\ldots ,t $. Note that there is no prime which divides $y, z$, and $P$ simultaneously. Indeed, if such a prime $p_j$ were to exist then there will be two consecutive terms $u_n$ and $u_{n+1}$ both divisible by $p_j$. Since $u_{n+1}=au_n+bu_{n-1}$ and $|b|=1$ then $p_j| u_{n-1}$. Using induction it follows that $p_j|u_1$ which is impossible since $u_1=1$.

Let $x_0\equiv y\pmod{P}$ and $x_1\equiv z \pmod{P}$.

Then we have $x_0\equiv u_{m_i-r_i}\pmod{p_i}$ and $x_1\equiv u_{m_i-r_i+1}\pmod{p_i}$ for all $i=1,2,\ldots, t$.
By induction on $n$ we obtain that $x_{n+1}\equiv u_{m_i-r_i+n}\pmod{p_i}$ for every $n\ge 0$ and every $1\le i\le t$.

Since $\{r_i\pmod{m_i}\}_{i=1}^t$ is a covering system, each nonnegative integer $n$ belongs to one of these residue classes, say $n=r_i+km_i$ for some $k\ge 0$ and some $i\in \{1, 2,\ldots, t\}$. This implies that
\begin{equation}
x_{n+1}\equiv u_{m_i-r_i+n}\pmod{p_i}\equiv u_{m_i(k+1)}\pmod{p_i}\equiv 0\pmod{p_i},
\end{equation}
since $p_i|u_{m_i}$ and $u_{m_i}|u_{m_i(k+1)}$.
Hence, every term of the sequence $(x_n)_n$ is divisible by some prime $p_i$. It remains to choose $x_0$ and $x_1$ relatively prime positive integers $x_0\equiv y\pmod{P}$ and $x_1\equiv z\pmod{P}$ such that $|x_n|\ge P$ for every $n\in \mathbb{N}$.

In order to achieve this take $x_0=y+P$ and $x_1=z+kP$ where $k\ge 2$ and $\gcd(x_0, x_1)=1$. Using Lemma \ref{lemma3} with $n_1=y+P$, $n_2=z$ and $n_3=P$ shows that such a choice is always possible. Recall that we proved earlier that $\gcd(y, z, P)=1$.
Such a choice implies that $0<P\le x_0<x_1$ and since $|a|>|b|=1$ Lemma \ref{lemma2} implies that $(|x_n|)_n$ is a strictly increasing sequence. It follows that $|x_n|\ge P$ for all $n\ge 0$ and therefore each such $x_n$ is composite.
\end{proof}

We can now prove the theorem if $|b|=1$.

Suppose first that $b=-1$ and $|a|=p_1^s\ge 4$. Then by Lemma \ref{bminus1} there are four distinct primes $p_1, p_2, p_3, p_4$ dividing
$u_2, u_6, u_6, u_6$, respectively. The theorem follows after using lemma \ref{last} for the triples $(p_1, 2, 0), (p_2, 6, 1), (p_3, 6, 3), p_4, 6, 5)$.

As a numerical illustration suppose that $a=-9, b=-1$. Then, as described in the paragraph following the proof of Lemma \ref{bminus1} we have $p_1=3, p_2=2, p_3=5$ and $p_4=13$.

The system \eqref{congruences} becomes
\begin{align*}
&y\equiv u_2\pmod{3}\quad\quad\quad z\equiv u_3\pmod{3}\\
&y\equiv u_5\pmod{2}\quad\quad\quad z\equiv u_6\pmod{2}\\
&y\equiv u_3\pmod{5}\quad\quad\quad z\equiv u_4\pmod{5}\\
&y\equiv u_1\pmod{13}\quad\quad\,\,\, z\equiv u_2\pmod{13}
\end{align*}
and its solution is $P=p_1p_2p_3p_4=390, y=105, z=134$. Since $y<z$ and $\gcd(y,z)=1$ one can safely take $x_0=y=105$ and $x_1=z=134$. Then, the sequence $(|x_n|)_{n\ge 0}$ is strictly increasing and for every $n\ge 0$ we have that $x_{2n}\equiv 0 \pmod{3}, x_{6n+1}\equiv 0 \pmod{2}, x_{6n+3}\equiv 0\pmod{5}$ and $x_{6n+5}\equiv 0 \pmod{13}$. It follows that all terms of the sequence are composite.

Next suppose that $b=1$ and $|a|=p_1^s\ge 6$ for some prime $p_1$. If $p_1\neq 3$ then by Lemma \ref{bplus1} there exist three distinct primes $p_1, p_2, p_3$ dividing
$u_2, u_4, u_4$, respectively.

The theorem follows after using Lemma \ref{last} for the triples $(p_1, 2, 0), (p_2, 4, 1), (p_3, 4, 3)$.
As in the case $b=-1$ we present the details in a couple of particular cases.

Suppose first that $a=8, b=1$. Then, as described in the paragraph following the proof of Lemma \ref{bplus1} we have $p_1=2, p_2=3, p_3=11$.
The system \eqref{congruences} becomes
\begin{align*}
&y\equiv u_0\pmod{2}\quad\quad\quad z\equiv u_1\pmod{2}\\
&y\equiv u_3\pmod{3}\quad\quad\quad z\equiv u_4\pmod{3}\\
&y\equiv u_1\pmod{11}\quad\quad\,\,\, z\equiv u_2\pmod{11}
\end{align*}
and its solution is $P=p_1p_2p_3=66, y=56, z=63$. Note that in this case $\gcd(y,z)=7>1$.  Still, one can safely take $x_0=y=56$ and $x_1=z+P=129$ and now $\gcd(x_0,x_1)=1$ as desired. Then, since $0<x_0<x_1$ the sequence $(|x_n|)_{n\ge 0}$ is strictly increasing and for every $n\ge 0$ we have that $x_{2n}\equiv 0 \pmod{2}, x_{4n+1}\equiv 0 \pmod{3}$, and $x_{4n+3}\equiv 0 \pmod{11}$. It follows that all terms of the sequence are composite.

Next assume that $a=-49, b=1$. Then, $p_1=7, p_2=3, p_3=89$ and the system \eqref{congruences} becomes
\begin{align*}
&y\equiv u_0\pmod{7}\quad\quad\quad z\equiv u_1\pmod{7}\\
&y\equiv u_3\pmod{3}\quad\quad\quad z\equiv u_4\pmod{3}\\
&y\equiv u_1\pmod{89}\quad\quad\,\,\, z\equiv u_2\pmod{89}.
\end{align*}
The solution is $P=p_1p_2p_3=1869, y=980, z=1464$. Notice that in this case $\gcd(y,z)=4>1$.  Still, one can safely take $x_0=y=980$ and $x_1=z+P=3333$ and now $\gcd(x_0,x_1)=1$ as desired. Moreover, since $0<x_0<x_1$ the sequence $(|x_n|)_{n\ge 0}$ is strictly increasing and for every $n\ge 0$ we have that $x_{2n}\equiv 0 \pmod{7}, x_{4n+1}\equiv 0 \pmod{3}$, and $x_{4n+3}\equiv 0 \pmod{89}$. It follows that all terms of the sequence are composite.

For the case $b=1$ and $|a|=3^s$ with $s\ge 2$ we use the second part of Lemma \ref{bplus1} to conclude that there are four distinct primes $p_1, p_2, p_3, p_4$ dividing
$u_2, u_6, u_6, u_6$, respectively. The theorem follows after using lemma \ref{last} for the triples $(p_1, 2, 0), (p_2, 6, 1), (p_3, 6, 3), (p_4, 6, 5)$.

We show the full details if $a=9, b=1$. Then, as mentioned in the paragraph following the proof of Lemma \ref{bplus1} we have $p_1=3, p_2=2, p_3=41, p_4=7$.
The system \eqref{congruences} becomes
\begin{align*}
&y\equiv u_2\pmod{3}\quad\quad\quad z\equiv u_3\pmod{3}\\
&y\equiv u_5\pmod{2}\quad\quad\quad z\equiv u_6\pmod{2}\\
&y\equiv u_3\pmod{41}\quad\quad\,\,\, z\equiv u_4\pmod{41}\\
&y\equiv u_1\pmod{7}\quad\quad\quad z\equiv u_2\pmod{7}.
\end{align*}
Solving we obtain $P=p_1p_2p_3p_4=1722, y=1107, z=1444$. In this case we can simply choose $x_0=y=1107$ and $x_1=z=1144$.

Then,  $\gcd(x_0,x_1)=1$ and since $0<x_0<x_1$ the sequence $(|x_n|)_{n\ge 0}$ is strictly increasing. Moreover, for every $n\ge 0$ we have that $x_{2n}\equiv 0 \pmod{3}, x_{6n+1}\equiv 0 \pmod{2}, x_{6n+3}\equiv 0 \pmod{41}$, and $x_{6n+5}\equiv 0 \pmod{7}$. It follows that all terms of the sequence are composite as desired.

\bigskip
\bigskip

At this point we have proved the main theorem when $|b|=1$ for all but finitely values of $a$.
We still have to study what happens when $b=-1$ and $|a|\le 3$ as well as the cases when $b=1$ and $|a|\le 5$.
Recall that the cases $a=0$ and $(a,b)=(\pm 2, -1)$ were already handled in section 2.

For most of these cases we will still use Lemma \ref{last}; the only difference is that the set of triples $\{p_i,m_i,r_i\}_{i=1}^{i=t}$ is occasionally going to be slightly more numerous.

We summarize our findings in the table below. We invite the reader to verify that the collections $\{p_i,m_i,r_i\}_{i=1}^{i=t}$ do indeed satisfy the three conditions in Lemma \ref{last}. Note that in each case we have $0<x_0<x_1$ and since $|a|>|b|=1$ Lemma \ref{lemma2} implies that $(|x_n|)_{n\ge 0}$ is strictly increasing.
\begin{table}[htbp]
\centering
\begin{tabular}{c|c|c|c|c}
 \hline
 $a$ & $b$ & $\{(p_i,m_i,r_i)\}$ & $x_0$ & $x_1$\\ [0.5ex]
 \hline
 $\phantom{-}5$ & $\phantom{-}1$ & $(5, 2, 0), (2, 6, 1), (7, 6, 3), (13,6,5)$ & $495$ & $1136$ \\
 $-5$ & $\phantom{-}1$ & $(5, 2, 0), (2, 6, 1), (7, 6, 3), (13,6,5)$ & $495$ & $866$ \\
 $\phantom{-}4$ & $\phantom{-}1$ & $(2,2,0), (3,4,1), (7,8,3), (23,8,7)$ & $116$ & $165$ \\
 $-4$ & $\phantom{-}1$ & $(2,2,0), (3,4,1), (7,8,3), (23,8,7)$ & $116$ & $801$ \\
 $\phantom{-}3$ & $\phantom{-}1$ &$(3,2,0), (11,4,1), (7,8,3), (17,8,7)$  & $1803$ & $3454$ \\
 $-3$ & $\phantom{-}1$ &$(3,2,0), (11,4,1), (7,8,3), (17,8,7)$  & $1803$ & $3091$ \\
 $\phantom{-}2$ & $\phantom{-}1$ & $(2,2,0), (5,3,0), (3,4,1), (7,6,5), (11,12,7)$ & $260$ & $807$ \\
 $-2$ & $\phantom{-}1$ & $(2,2,0), (5,3,0), (3,4,1), (7,6,5), (11,12,7)$ & $260$ & $1503$ \\
 \hline
 $\phantom{-}3$ & $-1$ & $(3,2,0),(2,3,0),(7,4,3),(47,8,5),(23,12,5),(1103, 24, 1)$ & $7373556$ & $2006357$ \\
 $-3$ & $-1$ & $(3,2,0),(2,3,0),(7,4,3),(47,8,5),(23,12,5), (1103, 24,1)$ & $7373556$ & $14686445$ \\[1ex]
 \hline
\end{tabular}
\medskip
\caption{Covering triples for the cases $b=1, a=\pm 2. \pm 3, \pm 4, \pm 5$ and $b=-1, a=\pm 3$.}
\label{table1}
\end{table}

It remains to see what happens when $|a|=|b|=1$ as in these cases Lemma \ref{last} does not apply.

If $a=-1, b=-1$ then it can be easily verified that the sequence given by the recurrence $x_{n+1}=-x_n-x_{n-1}$ has period 3. Hence, if one chooses $x_0=8$ and $x_1=27$ then $x_2=-35$ and due to the periodic behavior all terms of the sequence are composite.

Similarly, if $a=1, b=-1$ then the sequence given by the recurrence $x_{n+1}=x_n-x_{n-1}$  has period 6. Again, if one chooses $x_0=8$ and $x_1=35$ then the first few terms of the sequence are: $8, 35, 27, -8, -35, -37, 8, 35, 27, \ldots$, that is, $x_n$ is always composite.

If $a=b=1$ then Vsemirnov's pair $v_0=106276436867$, $v_1=35256392432$ shows that all the numbers
\begin{equation}\label{v}
v_n=v_{n-1}+v_{n-2}=v_1F_n+v_0F_{n-1}.
\end{equation}
are composite. Here , $F_n$ is the $n$th Fibonacci number where we have $F_{-1}=1, F_0=0, F_1=1$.

For the case $a=-1, b=1$, we follow the solution in \cite{dubickas}.

It can be easily checked that the general term of the sequence $x_{n+1}=-x_n+x_{n-1}$ can be written as
\begin{equation}\label{x}
x_n=(-1)^{n+1}x_1F_n+(-1)^nx_0F_{n-1}, \quad n\ge 0.
\end{equation}
We choose $x_0=v_0-v_1=71020044435$ and $x_1=v_0=106276436867$. It is immediate that $x_0$ and $x_1$ are relatively prime composite integers.
Moreover, from \eqref{v} and \eqref{x} we obtain that
\begin{align*}
x_n&=(-1)^{n+1}v_0F_n+(-1)^n(v_0-v_1)F_{n-1}+= (-1)^{n+1}v_1F_{n-1}+(-1)^{n+1}v_0(F_n-F_{n-1})=\\
&=(-1)^{n+1}v_1F_{n-1}+(-1)^{n+1}v_0F_{n-2}= (-1)^{n+1}\left(v_1F_{n-1}+v_0F_{n-2}\right)=(-1)^{n-1}v_{n-1}.
\end{align*}
Hence, $|x_n|=v_{n-1}$ is composite for all $n\ge 0$.
The proof of the theorem is now complete.

\section{\bf A surprising result}

In this section we prove the following
\begin{thm}
Consider the integers $a, b$ such that $|a|\ge 3$ and $b=-1$. Let $u_0=0$, $u_1=1$ and $u_{n+1}=au_n+bu_{n-1}$ be the Lucas sequence of the first kind
associated to $a$ and $b$. Then, $|u_n|$ is composite for all $n\ge 3$.
\end{thm}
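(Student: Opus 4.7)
The plan is to exhibit, for every $n \ge 3$, a nontrivial factorization $u_n = A \cdot B$ with $|A|, |B| \ge 2$; this makes $|u_n|$ automatically composite. By the easy induction $u_n(-a) = (-1)^{n-1} u_n(a)$, the quantity $|u_n|$ is unchanged under $a \mapsto -a$, so I may assume $a \ge 3$ throughout. A routine induction then shows that $0 < u_{n-1} < u_n$ for all $n \ge 2$; more precisely, $u_n - u_{n-1} = (a-1)u_{n-1} - u_{n-2} \ge u_{n-1} \ge a \ge 3$ for $n \ge 3$, while $u_2 - u_1 = a - 1 \ge 2$. Alongside $(u_n)$, I introduce the companion Lucas sequence $v_0 = 2$, $v_1 = a$, $v_{k+1} = a v_k - v_{k-1}$, which by the same argument is strictly increasing with $v_k \ge 2$ for every $k \ge 0$.

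Next I invoke two identities valid for $b = -1$, both provable by a short induction from the recurrence (or by specializing the standard $u_{m+n} = u_{m+1} u_n + b u_m u_{n-1}$ to $m + n$ equal to $2k-1$ and $2k$):
\begin{equation*}
u_{2k-1} = u_k^2 - u_{k-1}^2 = (u_k - u_{k-1})(u_k + u_{k-1}), \qquad u_{2k} = u_k \cdot v_k.
\end{equation*}
The first identity is peculiar to $b = -1$: for generic $b$ one has only $u_{2k-1} = u_k^2 + b u_{k-1}^2$, and only $b = -1$ produces a difference of squares. With these in hand the theorem is immediate. If $n = 2k - 1 \ge 3$ is odd (so $k \ge 2$), then $u_n = (u_k - u_{k-1})(u_k + u_{k-1})$ with $u_k - u_{k-1} \ge 2$ and $u_k + u_{k-1} \ge u_k \ge a \ge 3$. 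If $n = 2k \ge 4$ is even (so $k \ge 2$), then $u_n = u_k v_k$ with $u_k \ge 3$ and $v_k \ge 2$. In either case $|u_n|$ splits as a product of two integers each of absolute value at least $2$, hence is composite.

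\textbf{The main obstacle is conceptual rather than technical.} As the introduction flags, no finite covering system of primes appears to work in this regime, so the strategy of Lemma \ref{last} is unavailable. The difference-of-squares identity $u_{2k-1} = (u_k - u_{k-1})(u_k + u_{k-1})$ is precisely what lets us bypass the covering-system machinery: $u_n$ is forced to be composite because it visibly factors over $\mathbb{Z}$, not because some prescribed prime divides it, and the prime factors that appear can vary with $n$ and grow without bound. Beyond locating these identities and observing that the two sub-factors are bounded below by $2$, there are no routine calculations of substance.
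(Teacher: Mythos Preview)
Your proof is correct, and it takes a genuinely different route from the paper's. The paper argues by contradiction via Lemma~\ref{lemma1}: assuming $|u_{n+1}|=p$ is prime, the Cassini-type identity $u_{n+1}^2-au_nu_{n+1}+u_n^2=1$ is read as a quadratic in $u_n$, and one shows that the discriminant $(a^2-4)p^2+4$ cannot be a perfect square unless $p\le |a|$, contradicting $p\ge |u_3|=a^2-1>|a|$. Your argument instead exhibits an explicit factorization for every $n$, using the difference-of-squares identity $u_{2k-1}=(u_k-u_{k-1})(u_k+u_{k-1})$ (special to $b=-1$) together with the standard $u_{2k}=u_kv_k$, and then checks that both factors exceed~$1$. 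Your approach is more constructive and arguably more transparent: it reveals \emph{why} $u_n$ is composite rather than merely ruling out primality, and it meshes nicely with the paper's closing remark that the prime divisors of $u_n$ seem to vary with $n$. The paper's approach, on the other hand, stays entirely within the toolkit already developed (Lemma~\ref{lemma1} and Lemma~\ref{lemma2}), avoids introducing the companion sequence, and showcases a reusable technique for bounding when the Cassini invariant forces structure.
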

\begin{proof}
One has $u_3=a^2-1$, and $u_4=a^3-2a$ which are obviously composite. Since $|a|>|b|=1$, Lemma \ref{lemma2} implies that the sequence $(|u_n|)_{n\ge 0}$ is strictly increasing.
Suppose for the sake of contradiction that there exists an $n\ge 2$ such that $|u_{n+1}|=p$ where $p$ is some prime number. Since $|u_{n+1}|\ge |u_3|=a^2-1$ it follows that necessarily $p>|a|$.

Using now Lemma \ref{lemma1} for the sequence $(u_n)_{n\ge 0}$, equality \eqref{eq1} becomes
\begin{equation*}
u^2_{n+1}-au_nu_{n+1}-bu_n^2=(-b)^n(u_1^2-au_1u_0-bu_0^2),
\end{equation*}
and since $u_0=0$, $u_1=1$, $b=-1$, and $|u_{n+1}|=p$ we obtain that
\begin{equation}\label{quadratic}
u_n^2\pm apu_n+p^2-1=0.
\end{equation}
Regard the above equation as a quadratic in $u_n$. Since $u_n\in \mathbb{Z}$ it is necessary that the discriminant is a perfect square, that is, there exist a nonnegative integer $c$ such that
\begin{equation}\label{eq1000}
a^2p^2-4(p^2-1)=c^2 \quad \text{from which}\quad (a^2-4)p^2=c^2-4=(c-2)(c+2).
\end{equation}
Since $|a|\ge 3$ one can assume that $c\ge 3$. Since $p$ is a prime and $p^2$ divides $(c-2)(c+2)$ we have two possibilities. If $p$ divides both $c-2$ and $c+2$ then $p$ divides $4$ which means that $p=2$. However, this is impossible since $p>|a|\ge 3$.
Otherwise, $p^2$ divides either $c-2$ or $c+2$. In either case we obtain that $c+2\ge p^2$.
Using this inequality in \eqref{eq1000} it follows that
\begin{equation*}
(a^2-4)p^2=(c-2)(c+2)\ge p^2(p^2-4)\longrightarrow |a|\ge p,\quad \text{contradiction}.
\end{equation*}
This completes the proof.
\end{proof}

What is remarkable about this situation is that while $u_n$ is composite for every $n\ge 3$ it appears that there is no finite set of primes $p_1, p_2,\ldots p_t$ such that every
$u_n$ is divisible by some $p_i$, $1\le i\le t$. In fact, we suspect the following is true.
\begin{conj}
Let $(u_n)_{n\ge 0}$ be the Lucas sequence associated to some $|a|\ge 3$ and $b=-1$. Then for any two different primes $p$ and $q$ we have that $u_p$ and $u_q$ are relatively prime.
\end{conj}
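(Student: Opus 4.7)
The approach is to establish the classical \emph{strong divisibility} property for the Lucas sequence $(u_n)$: whenever $\gcd(a,b)=1$,
\[
\gcd(u_m, u_n) \;=\; u_{\gcd(m,n)} \qquad \text{for all } m,n \ge 1.
\]
Since $b=-1$ gives $\gcd(a,b)=1$ automatically, and $\gcd(p,q)=1$ for distinct primes $p,q$, this identity immediately yields $\gcd(u_p,u_q) = u_1 = 1$, settling the conjecture. Note that the hypothesis $|a|\ge 3$ is not actually needed for this particular corollary.

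The proof rests on two ingredients. First, \emph{consecutive terms are coprime}: applying Lemma \ref{lemma1} directly to the Lucas sequence (so that $x_0=u_0=0$ and $x_1=u_1=1$) gives
\[
u_{n+1}^2 - a\,u_n\, u_{n+1} - b\, u_n^2 \;=\; (-b)^n.
\]
When $b=-1$ the right-hand side equals $1$, so any common divisor of $u_n$ and $u_{n+1}$ divides $1$; hence $\gcd(u_n,u_{n+1})=1$ for every $n\ge 0$. Second, the \emph{addition formula}
\[
u_{m+n} \;=\; u_{m+1}\,u_n + b\, u_m\, u_{n-1} \qquad (m\ge 0,\; n\ge 1)
\]
holds, and can be obtained by the same matrix-power computation used in the proof of Lemma \ref{lemma1}: a straightforward induction on $k$ shows
\[
\begin{bmatrix} a & b \\ 1 & 0 \end{bmatrix}^{k} \;=\; \begin{bmatrix} u_{k+1} & b\,u_k \\ u_k & b\,u_{k-1} \end{bmatrix},
\]
and comparing the $(1,1)$-entry of $M^{m+n}$ with that of $M^m M^n$ yields the desired formula.

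The remainder is a Euclidean descent on the indices. Given $m\ge n\ge 1$, write $m=qn+r$ with $0\le r<n$. If $r=0$, then $u_n\mid u_m$ by the divisibility property already recorded in Section 5, so $\gcd(u_m,u_n)=u_n=u_{\gcd(m,n)}$. If $r\ge 1$, substituting $m\mapsto qn$ and $n\mapsto r$ in the addition formula gives $u_{m}=u_{qn+1}\,u_r + b\,u_{qn}\,u_{r-1}$; reducing modulo $u_n$ (using $u_n\mid u_{qn}$) produces $\gcd(u_m,u_n)=\gcd(u_{qn+1}\,u_r,\,u_n)$. Any prime dividing both $u_{qn+1}$ and $u_n$ would also divide $u_{qn}$, hence divide $\gcd(u_{qn+1},u_{qn})=1$, which is impossible. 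Thus $\gcd(u_m,u_n)=\gcd(u_r,u_n)$, and iterating the Euclidean algorithm on the indices eventually reaches $\gcd(u_d,u_0)=u_d$ with $d=\gcd(m,n)$, completing the identity.

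There is no serious obstacle beyond careful bookkeeping: one must verify the addition formula (an exercise parallel to Lemma \ref{lemma1}) and check the edge cases of the descent, notably when $r\in\{0,1\}$. A curious feature is that the conjecture as stated is strictly weaker than the strong divisibility identity we prove, yet does not appear to admit a shorter ad~hoc proof; the descent argument seems essentially forced.
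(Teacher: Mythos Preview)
Your proof is correct, and it actually \emph{resolves} what the paper states only as a conjecture: the paper offers no proof of this statement at all, merely remarking afterward that ``if true, this conjecture would immediately imply that the set of prime factors of $\{u_n\mid n\ge 0\}$ is infinite.'' What you have established is the classical \emph{strong divisibility} property of Lucas sequences, $\gcd(u_m,u_n)=|u_{\gcd(m,n)}|$ whenever consecutive terms are coprime, which for $b=-1$ follows at once from Lemma~\ref{lemma1}. Applied to distinct primes $p,q$ this gives $\gcd(u_p,u_q)=|u_1|=1$, and as you observe the hypothesis $|a|\ge 3$ plays no role.

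The argument itself is sound. The identity $u_{n+1}^2-au_nu_{n+1}-bu_n^2=(-b)^n$ from Lemma~\ref{lemma1} indeed gives $\gcd(u_n,u_{n+1})=1$ when $b=-1$; the matrix-power derivation of the addition formula $u_{m+n}=u_{m+1}u_n+bu_mu_{n-1}$ is correct (it is the $(2,1)$-entry of $M^{m+n}=M^mM^n$, with the roles of $m$ and $n$ swapped); and the Euclidean descent on indices goes through as written. One cosmetic remark: the descent more naturally terminates at the step $\gcd(u_{kd},u_d)=|u_d|$ via your $r=0$ case rather than at $\gcd(u_d,u_0)$, but since $u_0=0$ the two formulations coincide. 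In short, you have not matched the paper's proof but supplied one where the paper has none; the authors appear to have overlooked that their conjecture is a standard theorem in the theory of Lucas sequences.
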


If true, this conjecture would immediately imply that the set of prime factors of $\{u_n \, | n\ge 0\}$ is infinite.

\end{document}